\newtheorem{thm}{Theorem}[section]%
\newtheorem{lem}[thm]{Lemma}%
\theoremstyle{bodyrm}%
\newtheorem{rem}[thm]{Remark}%
\newcommand{\dC}{\mathbb{C}}  
\newcommand{\dE}{\mathbb{E}}
 \newcommand{\dP}{\mathbb{P}} 
 \newcommand{\dR}{\mathbb{R}}
\newcommand{\bA}{\mathbf{A}} \newcommand{\bB}{\mathbf{B}}
\newcommand{\bC}{\mathbf{C}}
\newcommand{\bI}{\mathbf{I}} 
\newcommand{\bM}{\mathbf{M}}
 \newcommand{\bX}{\mathbf{X}}
\newcommand{\al}{\alpha}      
\newcommand{\be}{\beta}
\newcommand{\De}{\Delta}
\newcommand{\de}{\delta}
\newcommand{\la}{\lambda}
\newcommand{\veps}{\varepsilon}
\newcommand{\ABS}[1]{{{\left| #1 \right|}}} 
\newcommand{\NRM}[1]{{{\left\| #1\right\|}}} 
\newcommand{\PAR}[1]{{{\left(#1\right)}}} 
\renewcommand{\leq}{\leqslant}
\renewcommand{\geq}{\geqslant}
\title{Circular law for non-central random matrices} %
\author{Djalil~\textsc{Chafa\"\i}} %
\date{\small Preprint, June 2008. Revised March 2010.}
\begin{document}

\maketitle

\begin{abstract}
  Let $(X_{jk})_{j,k\geq 1}$ be an infinite array of i.i.d.\ complex random
  variables, with mean $0$ and variance $1$. Let $\la_{n,1},\ldots,\la_{n,n}$
  be the eigenvalues of $(\frac{1}{\sqrt{n}}X_{jk})_{1\leq j,k\leq n}$. The
  strong circular law theorem states that with probability one, the empirical
  spectral distribution $\frac{1}{n}(\de_{\la_{n,1}}+\cdots+\de_{\la_{n,n}})$
  converges weakly as $n\to\infty$ to the uniform law over the unit disc
  $\{z\in\dC;|z|\leq1\}$. In this short note, we provide an elementary
  argument that allows to add a deterministic matrix $M$ to $(X_{jk})_{1\leq
    j,k\leq n}$ provided that $\mathrm{Tr}(MM^*)=O(n^2)$ and
  $\mathrm{rank}(M)=O(n^\al)$ with $\al<1$. Conveniently, the argument is
  similar to the one used for the non--central version of Wigner's and
  Marchenko-Pastur theorems.
\end{abstract}

\bigskip

{\footnotesize \textbf{AMS 2010 Mathematical Subject Classification:} 15B52.}

{\footnotesize \textbf{Keywords:} Random matrices; Circular law.}

\bigskip

\section{Introduction}

For any square $n\times n$ matrix $\bA$ with complex entries, let the complex
eigenvalues $\la_1(\bA),\ldots,\la_n(\bA)$ of $\bA$ be labeled so that
$\ABS{\la_1(\bA)}\geq\cdots\geq\ABS{\la_n(\bA)}$. The \emph{empirical spectral
  distribution} of $\bA$ is the discrete probability measure
$\mu_{\bA}:=\frac{1}{n}\sum_{k=1}^n\de_{\la_k(\bA)}$. We denote by
$s_1(\bA)\geq\cdots\geq s_n(\bA)$ the \emph{singular values} of $\bA$, i.e.\
the eigenvalues of the positive semi-definite Hermitian matrix
$\sqrt{\bA\bA^*}$ where $\bA^*$ is the conjugate--transpose of $\bA$. The
\emph{operator norm} is $s_1(\bA)=\max_{\NRM{x}_2=1}\NRM{\bA x}_2$ and the
square \emph{Hilbert-Schmidt} norm is
$\NRM{\bA}^2:=s_1(\bA)^2+\cdots+s_n(\bA)^2=\mathrm{Tr}(\bA\bA^*)=\sum_{j,k=1}^n|\bA_{j,k}|^2$.
Weyl's inequality $|\la_1(\bA)|^2+\cdots+|\la_n(\bA)|^2\leq
s_1(\bA)^2+\cdots+s_n(\bA)^2$ ensures that the second moment of $\mu_{\bA}$ is
always bounded above by $\frac{1}{n}\NRM{\bA}^2$. The following result was
recently obtained by Tao and Vu \cite[Corollary 1.15]{tao-vu-cirlaw-bis}.

\begin{thm}[Circular law for central random matrices]\label{th:nccl}
  Let $(X_{jk})_{j,k\geq1}$ be i.i.d.\ complex random variables. Let
  $(M_{jk})_{j,k\geq1}$ be deterministic complex numbers. For every integer
  $n\geq1$, set $\bX_n=(X_{jk})_{1\leq j,k\leq n}$ and $\bM_n=(M_{jk})_{1\leq
    j,k\leq n}$. If
  \begin{itemize}
  \item $\dE[|X_{1,1}|^2]=1$ and $\dE[X_{1,1}]=0$
  \item $\NRM{\bM_n}^2=O(n^2)$ and $\mathrm{rank}(\bM_n)=O(n^\al)$
    for some $\al<1$
  \end{itemize}
  then with probability one, $\mu_{\frac{1}{\sqrt{n}}(\bX_n+\bM_n)}$ tends
  weakly as $n\to\infty$ to the uniform distribution on the unit disc
  $\{z\in\dC;\ABS{z}\leq1\}$ (known as the circular law).
\end{thm}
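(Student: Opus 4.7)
The plan is to invoke Girko's Hermitization reduction: weak convergence of $\mu_{\tilde{\bA}_n}$ to the circular law $\mu_\circ$ (with $\tilde{\bA}_n := (\bX_n + \bM_n)/\sqrt n$) follows from the almost sure pointwise convergence, for a.e.\ $z \in \dC$, of the logarithmic potential $\frac{1}{n}\log|\det(\tilde{\bA}_n - zI)|$ to $\int \log|w - z|\,d\mu_\circ(w)$, together with suitable uniform integrability. The central case ($\bM_n = 0$, i.e.\ the circular law for $\bX_n/\sqrt n$) is taken as the black-box input of Tao--Vu, so writing $\bA_n := \bX_n/\sqrt n$ the task reduces to showing
\[
\De_n(z) := \tfrac{1}{n}\bigl[\log|\det(\tilde{\bA}_n - zI)| - \log|\det(\bA_n - zI)|\bigr] \longrightarrow 0 \quad\text{a.s., for a.e.\ } z.
\]

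To bound $\De_n(z)$ I would combine the two hypotheses on $\bM_n$. The Hilbert--Schmidt bound $\NRM{\bM_n}^2 = O(n^2)$ gives $\NRM{\bM_n/\sqrt n}_{\mathrm{op}} \leq \NRM{\bM_n}/\sqrt n = O(\sqrt n)$, so $\NRM{\tilde{\bA}_n - zI}_{\mathrm{op}} = O(\sqrt n)$ a.s. The rank bound $r_n := \mathrm{rank}(\bM_n) = O(n^\al)$ then yields, via Weyl's singular value inequalities, the interlacing
\[
s_{k+r_n}(\tilde{\bA}_n - zI) \leq s_k(\bA_n - zI) \leq s_{k-r_n}(\tilde{\bA}_n - zI).
\]
Summing $\log s_k$ over $k=1,\ldots,n$ causes the $n - 2r_n$ central terms to telescope, leaving $\De_n(z)$ controlled by $O(r_n/n)$ times a maximum of $\log\NRM{\cdot}_{\mathrm{op}} = O(\log n)$ (for the top singular values) and of $|\log s_n|$ (for the bottom singular values of both matrices). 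An equivalent route factors $\bM_n/\sqrt n = \bU\bV^*$ with $\bU, \bV \in \dC^{n \times r_n}$ and $\NRM{\bU}_{\mathrm{op}}\NRM{\bV}_{\mathrm{op}} = O(\sqrt n)$, then uses Sylvester's determinant identity to rewrite $\De_n(z) = \frac{1}{n}\log|\det(I_{r_n} + \bV^*(\bA_n - zI)^{-1}\bU)|$, an $r_n \times r_n$ determinant whose log is $O(r_n \log n)$ via operator-norm estimates.

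The \textbf{main obstacle} is the $|\log s_n|$ contribution: one needs $|\log s_n(\bA_n - zI)|$ and $|\log s_n(\tilde{\bA}_n - zI)|$ to be $O(\log n)$ a.s.\ for a.e.\ $z$, equivalently polynomial lower bounds $s_n \geq n^{-C}$. For $\bA_n - zI$ this is the Tao--Vu least-singular-value estimate underlying the central circular law. For $\tilde{\bA}_n - zI$ the interlacing only controls $s_{n - r_n}$, not $s_n$, so the $r_n$ smallest singular values of the perturbed matrix need a separate treatment. I would invoke the Tao--Vu / Rudelson--Vershynin least-singular-value estimate for shifted i.i.d.\ matrices: since $\tilde{\bA}_n - zI = \bX_n/\sqrt n - (zI - \bM_n/\sqrt n)$ with the shift deterministic and of polynomially bounded operator norm, their estimate applies verbatim and yields $s_n(\tilde{\bA}_n - zI) \geq n^{-C'}$ a.s. Combining everything then gives $|\De_n(z)| = O(n^{\al - 1}\log n) \to 0$, and Hermitization closes the argument.
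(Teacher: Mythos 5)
Your proposal is correct and follows essentially the same route as the paper: reduce to the central case by showing that the difference of normalized log-determinants vanishes a.s.\ for a.e.\ $z$, controlling the bulk singular values via Thompson-type interlacing under the rank-$O(n^\al)$ perturbation and the extreme ones via the Hilbert--Schmidt bound together with the Tao--Vu polynomial lower bound on $s_n(\bX+\bC)$ for deterministic $\bC$ of polynomially bounded norm (applied to both the unperturbed and the perturbed matrix, exactly as you note). The only cosmetic differences are that the paper packages the comparison through Tao--Vu's replacement principle rather than Hermitization of $\tilde{\bA}_n$ directly, and phrases the interlacing as the uniform bound $\NRM{F_{n,z}-G_{n,z}}_\infty\leq\mathrm{rank}(\bM_n)/n$ on cumulative distribution functions combined with an integration-by-parts lemma, which is precisely your Abel-summation telescoping in integral form.
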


The aim of this note is to provide an alternative and elementary argument
which reduces theorem \ref{th:nccl} to the central case where $\bM_n\equiv0$
for every $n$. Conveniently, the approach is close in spirit to the one used by Bai
\cite{MR1711663} for the derivation of Wigner's and Marchenko-Pastur theorems
for non--central random matrices. 

This note was motivated by the study of random Markov matrices, including the
Dirichlet Markov Ensemble \cite{chafai,MR2549497}, for which a circular law
theorem is conjectured. The initial version of this note was written before
the apparition of \cite{tao-vu-cirlaw-bis}, and provided for the first time a
non--central version of the circular law theorem. The initial version was
based on potential theoretic tools. For convenience, the present version makes
use instead of the replacement principle borrowed from
\cite{tao-vu-cirlaw-bis}.

Theorem \ref{th:nccl} belongs to a sequence of works by many authors,
including Mehta \cite{MR0220494}, Girko \cite{MR773436}, Silverstein
\cite{MR841088}, Bai \cite{MR1428519}, Edelman \cite{MR1437734} \'Sniady
\cite{MR1929504}, Bai and Silverstein \cite{bai-silverstein-book}, Pan and
Zhou \cite{pan-zhou}, G\"otze and Tikhomirov \cite{gotze-tikhomirov}, and Tao
and Vu \cite{tao-vu-circular-law}.

\begin{rem}[Constant case] Consider the case where the entries of $\bM_n$ are
  all equal to $1$ in theorem \ref{th:nccl}. We have then
  $\mathrm{rank}(\bM_n)=1$ and $s_1(\bM_n)=n$. Suppose additionally that
  $X_{1,1}$ has finite fourth moment. Then, by Bai and Yin theorem
  \cite{MR1235416}, with probability one,
  $\lim_{n\to\infty}s_1(\frac{1}{\sqrt{n}}\bX_n)=2$, and thus
  $\frac{1}{\sqrt{n}}(\bX_n+\bM_n)$ is a random bounded perturbation of the
  rank one symmetric matrix $\frac{1}{\sqrt{n}}\bM_n$ which has spectrum
  $$
  \la_n(\bM_n)=\cdots=\la_2(\bM_n)=0
  \quad\text{and}\quad
  \la_1(\bM_n)=\sqrt{n}.
  $$
  From this observation, Silverstein \cite{MR1284550} has shown, via
  perturbation techniques such as Bauer-Fike and Gerschgorin theorems, that
  with probability one,
  $$
  \ABS{\la_2\PAR{\frac{1}{\sqrt{n}}(\bX_n+\bM_n)}}\leq 2+o(1)
  \quad\text{and}\quad %
  \ABS{\la_1\PAR{\frac{1}{\sqrt{n}}(\bX_n+\bM_n)}-\sqrt{n}} \leq 2+o(1).
  $$
  See also the work of Andrew \cite{MR1062321}. Also, with probability one, as
  $n\to\infty$, the spectral radius $|\la_1(\frac{1}{\sqrt{n}}(\bX_n+\bM_n))|$
  blows up while $\mu_{\frac{1}{\sqrt{n}}(\bX_n+\bM_n)}$ remains weakly
  localized.
\end{rem}

\section{Reduction to the central case}

In order to show that theorem \ref{th:nccl} reduces to the central case where
$\bM_n\equiv0$ for every $n\geq1$, it suffices to check the assumptions of the
replacement principle of theorem \ref{th:replacement} with
$\bA_n:=\frac{1}{\sqrt{n}}\bX_n$ and $\bB_n:=\frac{1}{\sqrt{n}}(\bX_n+\bM_n)$.
By the strong law of large numbers and the assumption on $\NRM{\bM_n}$, with
probability one,
$$
\frac{1}{n}\NRM{\bA_n}^2+\frac{1}{n}\NRM{\bB_n}^2=O(1).
$$
Next, by theorem \eqref{th:tv-sn} and the first Borel-Cantelli lemma, for all
$z\in\dC$, with probability one, the random matrices $\bA_n-z\bI_n$ and
$\bB_n-z\bI_n$ are invertible for large enough $n$. Let us define, for large
enough $n$, the quantity
$$
\Delta_{n,z}
:=\frac{1}{n}\log\ABS{\det\PAR{\bA_n-z\bI_n}}
-\frac{1}{n}\log\ABS{\det\PAR{\bB_n-z\bI_n}}.
$$
If we set $\mu_{n,z}:=\mu_{\sqrt{(\bA_n-z\bI_n)(\bA_n-z\bI_n)^*}}$ and
$\nu_{n,z}:=\mu_{\sqrt{(\bB_n-z\bI_n)(\bB_n-z\bI_n)^*}}$ then
$$
\Delta_{n,z}=\int_0^\infty\!\log(t)\,d(\mu_{n,z}-\nu_{n,z})(t).
$$
By the strong law of large numbers and the assumption on $\NRM{\bM_n}$, for
all $z\in\dC$, with probability one, there exists $a>0$ such that
$$
\max(s_1(\bA_n-z\bI_n),s_1(\bB_n-z\bI_n))\leq n^a
$$
for large enough $n$. On the other hand, by theorem \eqref{th:tv-sn} and the
first Borel-Cantelli lemma, for all $z\in\dC$, with probability one, there
exists $b>0$ such that
$$
\min(s_n(\bA_n-z\bI_n),s_n(\bB_n-z\bI_n))\geq n^{-b}
$$
for large enough $n$. Therefore, with $\al_n:=n^{-b}$ and $\be_n:=n^a$,
and large enough $n$,
$$
\De_{n,z}=\int_{\al_n}^{\be_n}\!\log(t)\,d(\mu_{n,z}-\nu_{n,z})(t).
$$
Let $F_{n,z}$ and $G_{n,z}$ be the cumulative distribution functions of the
real probability measures $\mu_{n,z}$ and $\nu_{n,z}$. By lemma
\ref{le:rank} and the assumption on $\mathrm{rank}(\bM_n)$, for almost all
$z\in\dC$, with probability one, there exists $\veps>0$ such that
$$
\NRM{F_{n,z}-G_{n,z}}_\infty = O(n^{-\veps}).
$$
Therefore, by lemma \ref{le:IBP}, we obtain, for almost all $z\in\dC$, with
probability one,
$$
\ABS{\De_{n,z}} \leq (\log(\be_n)-\log(\al_n))\NRM{F_{n,z}-G_{n,z}}_\infty
=o(1).
$$

\section{Tools}\label{se:tools}

This section gathers some tools used in our proof of theorem \ref{th:nccl}. By
Green's theorem, for any complex polynomial $P$ and smooth compactly supported
$f:\dC\to\dR$,
$$
\int_{\dC}\!f\,d\mu=\frac{1}{2\pi}\int_{\dC}\!\De f\log|P|\,dxdy
$$
where $\mu:=\de_{\la_1}+\cdots+\de_{\la_n}$ is the counting measure of the
roots $\la_1,\ldots,\la_n$ of $P$ in $\dC$. Used for characteristic
polynomials of random matrices, this identity provides, via dominated
convergence arguments, the following theorem, see \cite[Theorem
2.1]{tao-vu-cirlaw-bis}.

\begin{thm}[Replacement principle]\label{th:replacement}
  Let $(\bA_n)_{n\geq1}$ and $(\bB_n)_{n\geq1}$ be two sequences of complex
  random matrices where $\bA_n,\bB_n$ are $n\times n$, without any
  assumptions. If
  \begin{itemize}
  \item with probability one
    $\frac{1}{n}\NRM{\bA_n}^2+\frac{1}{n}\NRM{\bB_n}^2=O(1)$
  \item for almost all $z\in\dC$, with probability one, the random matrices
    $\bA_n-z\bI_n$ and $\bB_n-z\bI_n$ are invertible for large enough $n$
  \item for almost all $z\in\dC$, with probability one,
    $$
    \lim_{n\to\infty}\PAR{\frac{1}{n}\log\ABS{\det\PAR{\bA_n-z\bI_n}}-\frac{1}{n}\log\ABS{\det\PAR{\bB_n-z\bI_n}}}=0
    $$
  \end{itemize}
  then with probability one, $\mu_{\bA_n}-\mu_{\bB_n}$ tends weakly to zero as
  $n\to\infty$.
\end{thm}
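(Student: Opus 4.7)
The plan is to apply Green's formula to translate weak convergence of $\mu_{\bA_n}-\mu_{\bB_n}$ into pointwise convergence of log-potentials, then pass to the limit under the integral via a uniform integrability argument. A standard tightness argument, which here follows from hypothesis (i) together with Weyl's inequality $\sum_k|\la_k|^2\leq\sum_k s_k^2$, reduces convergence against $C_b$ test functions to convergence against $f\in C_c^\infty(\dC)$. For such $f$, since $\log|\det(\bM-z\bI_n)|=\sum_{k=1}^n\log|\la_k(\bM)-z|$ and $\De_z\log|w-z|=2\pi\de_w$ distributionally, one has
$$
\int_\dC f\,d\mu_{\bM}=\frac{1}{2\pi n}\int_\dC\De f(z)\log|\det(\bM-z\bI_n)|\,dx\,dy.
$$
Setting $L_n(z):=\frac{1}{n}\log|\det(\bA_n-z\bI_n)|-\frac{1}{n}\log|\det(\bB_n-z\bI_n)|$ and picking $R>0$ with $\mathrm{supp}(f)\subset B_R:=\{|z|\leq R\}$, the task reduces to showing $\int_{B_R}\De f(z)L_n(z)\,dx\,dy\to 0$ almost surely.

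By hypothesis (iii) combined with a Fubini argument on the product $dz\times d\dP$, almost surely $L_n(z)\to 0$ for Lebesgue-a.e.\ $z\in\dC$. The positive part of each log-determinant is handled easily: Hadamard/AM--GM applied to singular values gives
$$
\tfrac{1}{n}\log|\det(\bA_n-z\bI_n)|\leq\tfrac{1}{2}\log\!\PAR{\tfrac{1}{n}\NRM{\bA_n-z\bI_n}^2}\leq\tfrac{1}{2}\log\!\PAR{2\tfrac{\NRM{\bA_n}^2}{n}+2|z|^2},
$$
which is uniformly bounded on $B_R$ by hypothesis (i), and analogously for $\bB_n$. The main obstacle is the \emph{negative} part of each log-determinant: near any eigenvalue of $\bA_n$ or $\bB_n$ the integrand has a logarithmic singularity, so no pointwise dominating function is available, and one cannot invoke dominated convergence directly.

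To handle the negative part I would prove a uniform $L^{1+\eta}$ bound on $L_n$ over $B_R$ for some small $\eta>0$. Decomposing by eigenvalues and applying Jensen's inequality,
$$
\int_{B_R}\ABS{\tfrac{1}{n}\sum_{k=1}^n\log|\la_k(\bA_n)-z|}^{1+\eta}\!dx\,dy\leq\tfrac{1}{n}\sum_{k=1}^n\int_{B_R}\ABS{\log|\la_k(\bA_n)-z|}^{1+\eta}\!dx\,dy.
$$
Splitting according to whether $|\la_k|\leq 2R$ and using that $\log$ is locally $L^{1+\eta}$ near its singularity, each inner integral is bounded by $C_R(1+(\log^+|\la_k|)^{1+\eta})$. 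Since $(\log^+ t)^{1+\eta}\leq C_\eta(1+t^2)$, Weyl's inequality $\frac{1}{n}\sum_k|\la_k(\bA_n)|^2\leq\frac{1}{n}\NRM{\bA_n}^2=O(1)$ and its analogue for $\bB_n$ yield $\NRM{L_n}_{L^{1+\eta}(B_R)}=O(1)$ almost surely. Combined with the a.e.\ pointwise convergence, Vitali's convergence theorem gives $L_n\to 0$ in $L^1(B_R)$ almost surely, and hence $\int\De f\cdot L_n\to 0$, which concludes the proof.
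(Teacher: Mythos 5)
Your argument is correct and follows essentially the route the paper indicates for this result: the paper does not prove the replacement principle itself but cites Tao and Vu, sketching exactly the mechanism you carry out (Green's theorem turning $\int f\,d\mu$ into an integral of $\De f$ against the log-characteristic polynomial, followed by a convergence argument in $z$). Your uniform-integrability step --- the $L^{1+\eta}(B_R)$ bound obtained from the local integrability of $\log|\cdot|$ together with Weyl's inequality and hypothesis (i), then Vitali --- is precisely the ``dominated convergence argument'' alluded to in Section~\ref{se:tools}, and your observation that a genuine pointwise dominating function is unavailable is accurate.
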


The following lemma is a special case of the integration by parts formula for
the Lebesgue-Stieltjes integral (with atoms). We give a short proof for
convenience.

\begin{lem}[Integration by parts]\label{le:IBP}
  If $a_1,\ldots,a_n,b_1,\ldots,b_n\in[\al,\be]\subset\dR$, and $F_\mu$ and
  $F_\nu$ are the cumulative distribution functions of
  $\mu=\frac{1}{n}(\de_{a_1}+\cdots+\de_{a_n})$ and
  $\nu=\frac{1}{n}(\de_{b_1}+\cdots+\de_{b_n})$ respectively, then for any
  smooth $f:[\al,\be]\to\dR$,
  $$
  \int_\al^\be\!f(x)\,d\mu(x)
  -
  \int_\al^\be\!f(x)\,d\nu(x)
  =\int_\al^\be\!f'(x)(F_\mu(x)-F_\nu(x))\,dx.
  $$
  In particular, when $f$ is non decreasing,
  $$
  \ABS{\int_\al^\be\!f(x)\,d\mu(x)-\int_\al^\be\!f(x)\,d\nu(x)}
  \leq (f(\be)-f(\al))\NRM{F_\mu-F_\nu}_\infty.
  $$
\end{lem}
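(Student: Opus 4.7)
My plan is to reduce the claimed identity to the fundamental theorem of calculus, applied atom by atom, followed by a single application of Fubini. For each $a_i \in [\al,\be]$ I would write
\begin{equation*}
f(a_i) = f(\al) + \int_\al^{a_i} f'(x)\,dx = f(\al) + \int_\al^\be f'(x)\,\mathbf{1}_{\{x \leq a_i\}}\,dx,
\end{equation*}
average over $i$, and interchange the finite sum with the integral. The resulting average $\frac{1}{n}\sum_{i=1}^n \mathbf{1}_{\{x \leq a_i\}}$ agrees with $1 - F_\mu(x)$ Lebesgue-almost everywhere (the two may differ only at the finitely many atoms $a_1,\ldots,a_n$), so I obtain
\begin{equation*}
\int_\al^\be f(x)\,d\mu(x) = f(\al) + \int_\al^\be f'(x)\,(1 - F_\mu(x))\,dx,
\end{equation*}
and the analogous identity for $\nu$. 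Subtracting the two cancels the constant $f(\al)$ as well as the $+1$ inside the parenthesis, which yields the advertised Stieltjes integration-by-parts formula (up to the choice of sign convention between $F_\mu - F_\nu$ and $F_\nu - F_\mu$).

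For the second assertion, monotonicity of $f$ gives $f' \geq 0$ on $[\al,\be]$, so I would pull the absolute value inside the integral and factor out $\NRM{F_\mu - F_\nu}_\infty$:
\begin{equation*}
\ABS{\int_\al^\be f\,d\mu - \int_\al^\be f\,d\nu} \leq \NRM{F_\mu - F_\nu}_\infty \int_\al^\be f'(x)\,dx = (f(\be) - f(\al))\NRM{F_\mu - F_\nu}_\infty,
\end{equation*}
where the final equality is again the fundamental theorem.

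I anticipate no real obstacle: the whole argument reduces to a finite-sum interchange combined with a bounded integrand on a bounded interval, both of which are immediate. The only mild technicality is that the indicator $\mathbf{1}_{\{x \leq a_i\}}$ and the right-continuous step function $F_\mu$ can disagree at the atoms themselves, but this exceptional set is finite and hence Lebesgue-null, so it is invisible to the integrals above.
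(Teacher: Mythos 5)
Your argument is correct, and it takes a genuinely different route from the paper's. The paper first perturbs so that all $2n$ points are distinct, merges them into a single ordered list $c_1\leq\cdots\leq c_{2n}$ with signs $\veps_k=\pm1$, and performs a discrete Abel summation, identifying the partial sums $S_k=\veps_1+\cdots+\veps_k$ with $n(F_\mu-F_\nu)$ on each interval $[c_k,c_{k+1})$. You instead apply the fundamental theorem of calculus atom by atom, writing $f(a_i)=f(\al)+\int_\al^\be f'(x)\mathbf{1}_{\{x\leq a_i\}}\,dx$, and interchange the finite sum with the integral; this is the ``layer cake'' proof of Stieltjes integration by parts. Your version is arguably cleaner: it requires no distinctness assumption and no continuity/limiting step, and ties are handled automatically because the discrepancy between $\frac{1}{n}\sum_i\mathbf{1}_{\{x\leq a_i\}}$ and $1-F_\mu(x)$ is supported on the finite, hence Lebesgue-null, set of atoms. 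What the paper's route buys is a direct picture of $F_\mu-F_\nu$ as the step function $S_k/n$, which is the quantity ultimately bounded in the application. Your treatment of the second inequality (pulling the absolute value inside and using $f'\geq0$) is the same as what the paper's form of the identity yields.

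One point deserves more than the parenthetical you gave it: your derivation produces $\int_\al^\be f'(x)(F_\nu(x)-F_\mu(x))\,dx$, the \emph{negative} of the displayed identity in the lemma. This is not a matter of convention --- the identity as printed has the wrong sign (test it on $n=1$, $\mu=\de_\be$, $\nu=\de_\al$: the left side is $f(\be)-f(\al)$ while the printed right side is $-(f(\be)-f(\al))$), and the paper's own Abel-summation proof in fact also ends with the minus sign, so the statement and its proof disagree there. Your sign is the correct one. Since only the absolute-value bound is used downstream, nothing in the application is affected, but you should assert the identity with the sign your computation actually delivers rather than deferring to a ``convention''.
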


\begin{proof}
  One can assume by continuity that $a_1,\ldots,a_n,b_1,\ldots,b_n$ are all
  different. We reorder $a_1,\ldots,a_n,b_1,\ldots,b_n$ into
  $c_1\leq\cdots\leq c_{2n}$. For every $1\leq k\leq 2n$, set $\veps_k=+1$ if
  $c_k\in\{a_1,\ldots,a_n\}$ and $\veps_k=-1$ if $c_k\in\{b_1,\ldots,b_n\}$.
  We have
  $$
  \int_\al^\be\!f(x)\,d\mu(x)
  -
  \int_\al^\be\!f(x)\,d\nu(x)
  =
  \frac{1}{n}\sum_{k=1}^n(f(a_i)-f(b_i))
  =\frac{1}{n}\sum_{k=1}^{2n}\veps_k f(c_k).
  $$
  By an Abel transform, we get by denoting $S_k=\veps_1+\cdots+\veps_k$,
  $$
  \sum_{k=1}^{2n}\veps_k f(c_k)
  =-\sum_{k=1}^{2n-1} S_k(f(c_{k+1})-f(c_k))+S_{2n}f(c_{2n}).
  $$
  Since $F_\mu-F_\nu$ is constant and equal to $S_k$ on $[c_k,c_{k+1}[$,
  $$
  S_k(f(c_{k+1})-f(c_k))
  =\int_{c_k}^{c_{k+1}}\!f'(x)(F_\mu(x)-F_\nu(x))\,dx.
  $$
  It remains to notice that $S_{2n}=F_\mu(c_{2n})-F_\nu(c_{2n})=0$.
\end{proof}

The following lemma is a direct consequence of interlacing inequalities for
singular values obtained by Thompson \cite{MR0407051} in 1976. It was also
obtained by Bai \cite{MR1711663} and generalized by Benaych-Georges and Rao
\cite{benaych-georges-rao}. It is worthwhile to mention that it gives neither
an upper bound for $s_1(\bB),\ldots,s_k(\bB)$ nor a lower bound for
$s_{n-k+1}(\bB),\ldots,s_n(\bB)$ where $k:=\mathrm{rank}(\bA-\bB)$, even in
the case $k=1$.

\begin{lem}[Rank inequality]\label{le:rank}
  Let $\bA$ and $\bB$ be two $n\times m$ complex matrices. Let
  $F_{\sqrt{\bA\bA^*}},F_{\sqrt{\bB\bB^*}}$ be the cumulative distribution
  functions of $\mu_{\sqrt{\bA\bA^*}}$ and $\mu_{\sqrt{\bB\bB^*}}$. Then
  $$
  \NRM{F_{\sqrt{\bA\bA^*}}-F_{\sqrt{\bB\bB^*}}}_\infty %
  \leq \frac{1}{n}\mathrm{rank}(\bA-\bB).
  $$
\end{lem}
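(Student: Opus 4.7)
The plan is to reduce the Kolmogorov-type bound on the two singular-value CDFs to Weyl's additive inequality for singular values, $s_{i+j-1}(\bM_1+\bM_2) \leq s_i(\bM_1) + s_j(\bM_2)$. Setting $\bC := \bA-\bB$ and $k := \mathrm{rank}(\bC)$, the matrix $\bC$ has at most $k$ nonzero singular values, so $s_{k+1}(\bC) = 0$. Applying Weyl's inequality to $\bA = \bB + \bC$ with $j = k+1$ yields $s_{i+k}(\bA) \leq s_i(\bB)$ for every admissible $i$, and symmetrically $s_{i+k}(\bB) \leq s_i(\bA)$. This is the form of Thompson's interlacing that is relevant here.

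Next I would translate these interlacing relations into a uniform bound on counting functions. Fix $x \geq 0$ and set $N_\bA(x) := \#\{i \in \{1,\ldots,n\} : s_i(\bA) \leq x\}$, so that $n F_{\sqrt{\bA\bA^*}}(x) = N_\bA(x)$, and similarly $N_\bB(x)$; here we adopt the convention that the singular values $s_1(\bA) \geq \cdots \geq s_n(\bA) \geq 0$ are the square roots of the eigenvalues of the $n \times n$ positive semidefinite matrix $\bA\bA^*$, padded with zeros when $m < n$. If $N_\bA(x) = j$, then $s_{n-j+1}(\bA),\ldots,s_n(\bA)$ are all $\leq x$. Applying the interlacing bound $s_{i+k}(\bB) \leq s_i(\bA)$ with $i = n-j+1$ forces $s_{n-j+1+k}(\bB),\ldots,s_n(\bB)$ to lie below $x$, hence $N_\bB(x) \geq j - k$. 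By symmetry $|N_\bA(x) - N_\bB(x)| \leq k$, and dividing by $n$ gives the claim.

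The argument is short and essentially self-contained once Weyl's inequality is granted. The only delicate points are bookkeeping: handling the rectangular case $n \neq m$ via the zero-padding convention above, and verifying that Weyl's additive inequality applies in the rectangular setting (which it does, by embedding the matrices into a square one of side $\max(n,m)$). I do not anticipate a serious obstacle; one could equivalently quote Thompson's interlacing inequality \cite{MR0407051} directly, as the statement of the lemma suggests.
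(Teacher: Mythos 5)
Your proof is correct. The paper does not actually prove this lemma --- it simply cites Thompson's interlacing inequality $s_{i+k}(\bB)\leq s_i(\bA)$ for a rank-$k$ perturbation --- and your argument fills in exactly that route: you derive the interlacing from Weyl's additive inequality $s_{i+j-1}(\bM_1+\bM_2)\leq s_i(\bM_1)+s_j(\bM_2)$ (valid since $s_{k+1}(\bA-\bB)=0$) and then correctly convert it into the bound $|N_\bA(x)-N_\bB(x)|\leq k$ on the counting functions, including the trivial case $j\leq k$ and the zero-padding needed when $n\neq m$.
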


The following theorem is due to Tao and Vu \cite[Theorem
2.1]{tao-vu-circular-law}, and is inspired from the work of Rudelson and
Vershynin \cite{MR2407948}.

\begin{thm}[Polynomial bounds for smallest singular values]
  \label{th:tv-sn}
  Let $L$ be a probability distribution on $\dC$ with finite and non--zero
  variance. For every constants $A>0$ and $C_1>0$, there exists constants
  $B>0$ and $C_2>0$ such that for every $n\times n$ random matrix $\bX$ with
  i.i.d.\ entries of law $L$ and every $n\times n$ deterministic matrix $\bC$
  with $s_1(\bC)\leq n^{C_1}$, we have
  $$
  \dP(s_n(\bX+\bC)\leq n^{-B})\leq C_2 n^{-A}.
  $$
\end{thm}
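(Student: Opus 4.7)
The plan is to show that the event $\{s_n(\bX+\bC)\leq n^{-B}\}$ has probability at most $C_2 n^{-A}$ for appropriate $B,C_2$. Since $s_n(\bX+\bC)=\inf_{\NRM{x}_2=1}\NRM{(\bX+\bC)x}_2$, this amounts to ruling out the existence of a unit vector $x$ for which $\NRM{(\bX+\bC)x}_2$ is polynomially small. As a preliminary step I would establish an a priori operator-norm bound $s_1(\bX+\bC)\leq n^{C'}$ with probability at least $1-O(n^{-A-1})$, using the assumption $s_1(\bC)\leq n^{C_1}$ together with Markov's inequality applied to $\NRM{\bX}^2=\sum_{j,k}|X_{j,k}|^2$ and the hypothesis that $L$ has finite variance.

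Following the Rudelson--Vershynin / Tao--Vu strategy, I would split the unit sphere into \emph{compressible} vectors $\mathrm{Comp}(\de,\rho)$, those within Euclidean distance $\rho$ of some $\lfloor\de n\rfloor$-sparse vector, and their complement the \emph{incompressible} vectors $\mathrm{Incomp}(\de,\rho)$, for suitably small $\de,\rho>0$. For the compressible part, a single fixed unit vector $x$ satisfies $\dP(\NRM{(\bX+\bC)x}_2\leq t\sqrt{n})\leq e^{-cn}$ for some $t,c>0$ depending only on the variance of $L$, by applying Paley--Zygmund coordinatewise and using the independence of the rows of $\bX$. Since $\mathrm{Comp}(\de,\rho)$ admits an $\veps$-net of cardinality at most $\exp(C\de n\log(1/\rho\veps))$, choosing $\de$ small enough and combining with the a priori operator-norm bound handles the infimum over this set.

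The heart of the argument is the incompressible case. By a standard reduction, for any $x\in\mathrm{Incomp}(\de,\rho)$ one has $\NRM{(\bX+\bC)x}_2\geq c\min_{1\leq k\leq n}\mathrm{dist}(R_k,H_k)/\sqrt{n}$, where $R_k$ is the $k$-th row of $\bX+\bC$ and $H_k$ the span of the remaining $n-1$ rows. This reduces the problem to lower-bounding each $\mathrm{dist}(R_k,H_k)$ by an inverse polynomial in $n$ with sufficiently high probability. Conditioning on the other rows, one writes $\mathrm{dist}(R_k,H_k)=|\langle\mathbf{v},X_{k,\cdot}\rangle+\langle\mathbf{v},\bC_{k,\cdot}\rangle|$ where $\mathbf{v}$ is a unit normal to $H_k$, so the problem becomes a small-ball estimate for a deterministic shift of the random linear combination $\sum_i v_i X_{k,i}$.

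The main obstacle is this last small-ball estimate, because rationally structured normals $\mathbf{v}$ only admit weak Littlewood--Offord bounds. I would invoke the essential least common denominator $\mathrm{LCD}(\mathbf{v})$ and treat two regimes: when $\mathrm{LCD}(\mathbf{v})>n^D$, an Esseen-type inverse inequality directly yields the required polynomial small-ball bound on $|\sum_i v_i X_{k,i}+\text{const}|$; when $\mathrm{LCD}(\mathbf{v})\leq n^D$, I would bound the probability that a unit normal to $H_k$ lies in this structured exceptional set via a further net argument, exploiting the randomness carried by the other rows of $\bX$ that span $H_k$. Crucially, the matrix $\bC$ enters only as a deterministic shift of rows and hyperplanes, so every small-ball input is translation invariant in the relevant variable; this is exactly why the conclusion is uniform over all $\bC$ with $s_1(\bC)\leq n^{C_1}$.
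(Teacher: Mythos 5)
The paper does not prove this theorem: it imports it verbatim from Tao and Vu \cite{tao-vu-circular-law} (crediting the ideas of Rudelson and Vershynin \cite{MR2407948}) and uses it as a black box, so there is no internal proof to compare against. Judged on its own terms, your outline reproduces the Rudelson--Vershynin compressible/incompressible program, which is the right family of ideas, but as written it does not close under the stated hypothesis that $L$ has merely finite, non-zero variance.

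Two concrete gaps. First, the compressible step: with only a second moment you cannot get $s_1(\bX+\bC)=O(\sqrt{n})$ with probability $1-e^{-cn}$; you only have the polynomial bound $s_1(\bX+\bC)\leq n^{C'}$ that you obtain via Markov. A net argument over $\mathrm{Comp}(\de,\rho)$ then forces the mesh to be $\veps\sim n^{-C'}$ (so that $s_1\cdot\veps$ is dominated by $t\sqrt{n}$), whence the net has cardinality $\exp(C\de n\log n)$, which is \emph{not} beaten by the individual bound $e^{-cn}$ for any fixed $\de>0$. This entropy-versus-tail mismatch is exactly the obstruction that makes the heavy-tailed case hard; it must be handled either by letting $\de$ shrink with $n$ and re-examining the constants in the incompressible reduction, or, as Tao and Vu actually do, by replacing the LCD machinery with their inverse Littlewood--Offord theorems, which are tailored to the polynomial target $C_2n^{-A}$ rather than to exponential bounds. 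Second, your single-vector small-ball estimate ``by Paley--Zygmund coordinatewise'' silently requires a fourth moment of $\sum_j x_jX_{kj}$, which $L$ need not possess; one must first truncate the entries (keeping the variance bounded below) before any such anticoncentration is available. Finally, the structured-normal case $\mathrm{LCD}(\mathbf{v})\leq n^D$ is the technical heart of the whole theorem and is dispatched in one sentence; the ``further net argument'' there suffers from the same entropy problem as the compressible case. The roadmap is reasonable, but the steps that are asserted rather than argued are precisely the ones where the second-moment-only hypothesis bites, so this should be regarded as a sketch of the subgaussian case rather than a proof of the theorem as stated.
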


\bigskip

\textbf{Acknowledgments.} The final form of this note benefited from the
comments of anonymous referees. Part of this work was done during two visits
to the \textsc{Laboratoire Jean Dieudonn\'e} in Nice, France. The author would
like to thank Pierre \textsc{Del Moral} and Persi \textsc{Diaconis} for their
kind hospitality there, and also Neil \textsc{O'Connell} for his
encouragements.

\providecommand{\bysame}{\leavevmode\hbox to3em{\hrulefill}\thinspace}
\providecommand{\MR}{\relax\ifhmode\unskip\space\fi MR }
\providecommand{\MRhref}[2]{%
  \href{http://www.ams.org/mathscinet-getitem?mr=#1}{#2}
}
\providecommand{\href}[2]{#2}

\bigskip

\vfill\thispagestyle{empty}

{\footnotesize
  \noindent Djalil~\textsc{Chafa\"\i} \qquad
  \textbf{E-mail:} \texttt{djalil(at)chafai.net} \\
  \textsc{Laboratoire d'Analyse et de Math\'ematiques Appliqu\'ees}\\
  \textsc{UMR 8050 CNRS Universit\'e Paris-Est Marne-la-Vall\'ee} \\
  \textsc{5 Boulevard Descartes, F-77454 Cedex 2, Champs-sur-Marne, France.} }

\end{document}